\documentclass[12pt]{amsart}
\usepackage[margin=1in]{geometry}
\usepackage{amsmath}
\usepackage{amsthm}
\usepackage{amsbsy}
\usepackage{amssymb}
\usepackage{hyperref}
\usepackage{amsfonts}
\usepackage{color}

\usepackage{enumitem}
\numberwithin{equation}{section}
\newtheorem{theorem}{Theorem}[section]
\newtheorem{lemma}[theorem]{Lemma}

\newtheorem{corollary}[theorem]{Corollary}

\newtheorem{conjecture}[theorem]{Conjecture}

\theoremstyle{remark}
\newtheorem*{remark}{Remark}

\theoremstyle{remark}

\title[An elementary note on three consecutive powerful numbers]{An elementary note on three consecutive powerful numbers}

\author{Wenjun Ma$^{\sharp}$}
\address{
School of Science, Tianjin University of Technology and Education \\
1310 South Dagu Road, Hexi District, Tianjin 300222, P. R. China}
\email{wjma@tju.edu.cn}

\begin{document}

\begin{abstract}
In the recent work by T.~H. Chan \cite{Chan}, it was shown that there are no three consecutive powerful numbers with the middle term a perfect cube and each of the other two having only a single prime factor raised to an odd power. They get it with Pell equations, elliptic curves, and second-order recurrences. In this paper, with results given by Nagell-Ljunggren, Lebesgue, Ko and some elementary methods, we extend the result to similar cases, whose middle term can be perfect powers for infinitely many powers. With this conclusion, we discuss the solutions to some Diophantine equations.
\end{abstract}

\maketitle

\section{Introduction and Statement of the result}

A positive integer $n$ is called powerful or squareful if $p^2 \mid n$ for all primes $p$ such that $p \mid n$, i.~e., its prime factorization $n=p_1^{a_1}p_2^{a_2}\cdots p_r^{a_r}$ satisfies $a_i\geq2$ for all $1\leq i\leq r$. Similarly, $n$ is $k$-full if $p^k\mid n$ for all prime $p\mid n$.
It is easy to see that any powerful number $n$ can be factored uniquely as $n = a^2b^3$ for some integer $a\geq 1$ and squarefree number $b\geq 1$. Here a number $n$ is called squarefree if $p^2 \nmid n$ for all primes $p$. The following is an initial list of powerful numbers:
$$
1, 4, 8, 9, 16, 25, 27, 32, 36, 49, 64, 72, 81, 100, \ldots.
$$
For more, one can refer OEIS A001694.
Notice that $8$ and $9$ are consecutive powerful numbers. Indeed, $8=2^3$ and $9=3^2$ are perfect powers (that is, powers of exponent higher than one). They are the only perfect powers of natural numbers whose values are consecutive, i.e., the only nontrivial integral solution to the Diophantine equation $x^p-y^q=1$ is given by $3^2-2^3=1$. This is the well-known Catalan's conjecture (or Mih${\rm \breve{a}}$ilescu's theorem), which is raised by E.~C. Catalan in 1844 and proven by Mih${\rm \breve{a}}$ilescu in 2002 \cite{Mihailescu}. There have been many other results on powerful numbers. For example, Erd\"os conjectured that every sufficiently large integer is a sum of at most three powerful numbers, which was proved by Heath-Brown \cite{HeathBrown} \cite{HeathBrown2}.

When considering consecutive powerful numbers, we can find
$$
8, 9, 288, 289, 675, 676, 9800, 9801, \ldots.
$$
For more, one can refer OEIS A060355. In fact, there are infinitely many such pairs.
It has been shown that any solution $x_1,y_1$ of the Pell equation $x^2-dy^2=\pm 1$ with the extra condition that $d|y_1^2$ leads to an infinite family of consecutive powerful numbers. For more details, one can consult \cite{Golomb} and \cite{Walker}.
It's natural to ask if there are three consecutive powerful numbers next. In this topic, Erd\"os \cite{Erdos}, Mollin and Walsh \cite{Mollin} present the following conjecture.
\begin{conjecture}
There are no three consecutive powerful numbers.
\end{conjecture}

This conjecture is very useful. For example, it implies that there are infinitely many non-Wieferich primes (a Wieferich prime is a prime $p$ which is a solution to the congruence equation $2^{p-1}\equiv 1$ mod $p^2$, see \cite{Granville}, \cite{Ribenboim1} and \cite{Vardi}). It appears to be very hard and remains open.
Up to now, it's known that if a triplet of consecutive powerful numbers exists, then its smallest term must be congruent to $7$, $27$, or $35$ modulo $36$.
Conditional on the $abc$-conjecture, it's shown that there are only a finite number of sets of three consecutive powerful numbers. For relevant results, see \cite{Erdos}, \cite{Granville} and \cite{Mollin}.

Recently, Chan \cite{Chan} shows that there are no three consecutive powerful numbers with the middle term a perfect cube and each of the other two having only a single prime factor raised to an odd power, i.e., there are no three consecutive powerful numbers of the form $x^3-1=p^3y^2, x^3, x^3+1=q^3z^2$, with primes $p, q$ and positive integers $x, y, z$.
In this paper, we consider similar cases, whose middle term can be perfect powers for infinitely many powers. Let 
$$
S:=\{{\rm prime}\ p\geq 5\mid p\equiv 5\ {\rm mod}\ 8\}. 
$$
With facts given by Ljunggren, Lebesgue and Ko, we get the following.

\begin{theorem}\label{maintheorem}
There are no three consecutive powerful numbers of the form
$$
x^n-1=q_1^3 y^2, x^n, x^n+1 =q_2^3 z^2,
$$
where $q_1, q_2$ are primes, $x, y, z\in \mathbb{Z}$ and $n\geq 5$ is an integer that only contains prime factors belonging to $S$.
\end{theorem}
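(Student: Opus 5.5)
The plan is to reduce to a single prime exponent $p\in S$ and then factor the two neighbours of $x^n$ cyclotomically. Since $n\geq 5$ has only prime factors in $S$, pick a prime $p\mid n$ with $p\equiv 5 \pmod 8$ and set $w=x^{n/p}$. The triple becomes
$$w^p-1=q_1^3y^2,\qquad w^p,\qquad w^p+1=q_2^3z^2 .$$

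\textbf{Step 1: parity and the basic factorization.} If $w$ were odd, then $w^p-1$ and $w^p+1$ would both be even, and one of them would be $\equiv 2 \pmod 4$. That number would not be powerful, so $w$ is even and both neighbours are odd. Write
$$w^p-1=(w-1)\Phi(w),\qquad w^p+1=(w+1)\Phi(-w),\qquad \Phi(t)=\frac{t^p-1}{t-1}.$$
Here $\gcd(w-1,\Phi(w))\in\{1,p\}$, and $p^2\nmid\Phi(w)$ whenever $p\mid \Phi(w)$; the same holds for $w+1$ and $\Phi(-w)$.

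\textbf{Step 2: the coprime case.} Suppose $\gcd=1$ on the minus side. Then $q_1^3$ lies entirely in one factor and the other factor is a perfect square.
\begin{itemize}[label={}]
\item If $\Phi(w)=\square$, the Nagell--Ljunggren theorem applies. The equation $\frac{t^p-1}{t-1}=s^2$ with $p$ odd has only the solution $t=3$, $p=5$, and its negative-base analogue has no admissible solution. Since $w$ is even, this case is impossible.
\item Otherwise $w-1=t^2$. The same reasoning on the plus side, applied to $\Phi(-w)$, leaves $w+1=u^2$.
\end{itemize}
But $u^2-t^2=2$ has no solutions, so this case closes. The degenerate subcases, where a whole neighbour is a square times a cube of a prime that collapses, reduce to $X^2=Y^p\pm1$. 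These I would dispatch by Lebesgue's theorem ($X^2+1=Y^p$ has no nontrivial solutions) and Ko's theorem ($X^2-1=Y^p$ has only $3^2-1=2^3$, excluded because $p\neq 3$).

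\textbf{Step 3: the case $\gcd=p$ (the main obstacle).} Here $p\parallel\Phi(\pm w)$, which forces $q_1=p$ (respectively $q_2=p$). We get
$$\Phi(w)=p\,s^2\quad\text{or}\quad \Phi(w)=p^3 s^2,\qquad w-1=p^{\text{odd}}\,t^2 ,$$
and mixed combinations with the plus side. Since $q_1=q_2=p$ is impossible (the two neighbours are coprime), at most one side is in this case. The other side falls under Step 2 and gives $w-1=t^2$ or $w+1=u^2$.

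The plan here is a congruence/Jacobi-symbol argument, and this is where $p\equiv5 \pmod 8$ must enter. Take the side with $\gcd=p$, say the minus side. Then $w\equiv1\pmod p$ and $w$ is even. Combined with $w+1=u^2$ (or the analogous relation), I would compute $\left(\frac{\Phi(w)/p}{\cdot}\right)$ modulo $8$ and modulo suitable divisors of $w\pm1$. The aim is to show that $\Phi(w)/p$ (respectively $\Phi(w)/p^3$) cannot be a square.

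The key facts to use are:
\begin{itemize}[label={}]
\item $\Phi(w)\equiv 1+w \pmod{w^2}$.
\item $\left(\frac{2}{p}\right)=-1$ and $\left(\frac{-1}{p}\right)=1$ for $p\equiv5\pmod 8$.
\item $p^2\equiv 25 \pmod{64}$-type restrictions on $p\,s^2$ modulo $8$.
\end{itemize}
The expected contradiction is that $p\,s^2\equiv 5\pmod 8$, while $\Phi(w)\equiv 1 \pmod 8$ or $\equiv 1+w\pmod 8$ once $w$ is taken modulo $8$ using $w=u^2-1$ or $w=t^2+1$. If this direct modulo-$8$ clash fails in some subcase, the fallback is quadratic reciprocity with respect to $w\pm1$, again exploiting $\left(\frac{2}{p}\right)=-1$.
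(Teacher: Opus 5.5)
Your Steps 1 and 2 are sound and agree with the paper's Case 1: Ljunggren's theorem rules out $\Phi(\pm w)=\square$, which leaves $w-1$ and $w+1$ both squares. The gap is in Step 3, which is exactly where the hypothesis $p\equiv 5\pmod 8$ has to do its work.

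\textbf{The subcase analysis is wrong.} Your claim that $p\parallel\Phi(w)$ forces $q_1=p$ is false. We have $v_p(w^p-1)=v_p(w-1)+1$, and when $v_p(w-1)$ is odd this is even, so $p$ can sit inside $y^2$ with $q_1\neq p$. Two smaller points:
\begin{itemize}
\item Your conclusion that at most one side has $\gcd=p$ still holds, but for a different reason: $w\equiv 1$ and $w\equiv -1\pmod p$ are incompatible.
\item The case $\Phi(w)=p^3s^2$ never occurs, precisely because $p\parallel\Phi(w)$.
\end{itemize}
Write $\Phi(w)=pb$ with $\gcd((w-1)p,b)=1$. The actual subcases are $b=s^2$, and $b=q_1^3s^2$ with $w-1=pv^2$ and $q_1\neq p$.

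\textbf{The modulo-$8$ test does not cover both subcases.} It kills the first: $w=u^2-1\equiv 0\pmod 8$ gives $\Phi(w)\equiv 1$, while $ps^2\equiv 5$. In the second, however, $\Phi(w)=pq_1^3s^2\equiv 5q_1\pmod 8$, and this is $\equiv 1$ whenever $q_1\equiv 5\pmod 8$. The plus side has the same problem: there $w\equiv 2\pmod 8$ and $\Phi(-w)\equiv 3$, while $5q_2\equiv 3$ when $q_2\equiv 7\pmod 8$. So your stated aim, that $\Phi(w)/p$ is not a square, does not cover all cases. The ``fallback'' via reciprocity is never specified, so as written Step 3 is not a proof.

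\textbf{The missing idea is that $\Phi(\pm w)$ is irrelevant in this case.} Suppose the coprime side yields $w+1=u^2$ and the other side gives $w\equiv 1\pmod p$. Reducing modulo $p$ gives $u^2\equiv 2\pmod p$, which is impossible since $\left(\frac{2}{p}\right)=-1$. Symmetrically, $w\equiv -1\pmod p$ together with $w-1=t^2$ gives $t^2\equiv -2\pmod p$. This is impossible since $\left(\frac{-2}{p}\right)=\left(\frac{-1}{p}\right)\left(\frac{2}{p}\right)=-1$.

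You had both Legendre symbols on your list but never applied them to $w\pm 1$. This is exactly how the paper disposes of its Cases 2 and 3. It does not matter how $p$ and $q_1$ (or $q_2$) are distributed between $w\mp 1$ and $\Phi(\pm w)$.

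If you want to stay modulo $8$, the missed subcase can also be closed, but through $w\pm 1$ rather than $\Phi$:
\begin{itemize}
\item $w-1=pv^2\equiv 5\pmod 8$ contradicts $w\equiv 0\pmod 8$;
\item $w+1=pv^2\equiv 5\pmod 8$ contradicts $w\equiv 2\pmod 8$.
\end{itemize}
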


Applying this, we can get some results for Diophantine equations. For example, the following corollary follows from it immediately.

\begin{corollary}\label{corollary1}
Suppose $n\geq 5$ is an integer that only contains prime factors belonging to $S$, the Diophantine equation
$$
(2ax)^{2n}-1=q_1^3 q_2^3 y^2
$$
has no solution with integers $a, x, y$ and primes $q_1, q_2$.
\end{corollary}

This paper is organized as follows. In section $2$, we show some results that will be used in the proof. In section $3$ and $4$, the proof of theorem \ref{maintheorem} and corollary \ref{corollary1} is given. In section $5$, we have some remarks. Throughout the paper, all variables are integers. $p, q_1$ and $q_2$ stand for prime numbers.

\section{Preliminaries}

In this section, we give some facts which will be used in the proof.

We present the following lemma on the greatest common divisor of $x\pm 1$ and $\frac{x^p\pm 1}{x\pm 1}$ first.

\begin{lemma}\label{gcd}
Suppose $|x|>1$ be an integer and $p$ be an odd prime, then
\begin{eqnarray*}
&&{\rm gcd}\left(x-1, \frac{x^p-1}{x-1}\right)=\left\{\begin{array}{cc}
                                                 p & {\rm if}\ x\equiv 1 \ {\rm mod}\ p,  \\
                                                 1 & {\rm otherwise},
                                               \end{array}
                                               \right.   \\
&&{\rm gcd}\left(x+1, \frac{x^p+1}{x+1}\right)=\left\{\begin{array}{cc}
                                                 p & {\rm if}\ x\equiv -1 \ {\rm mod}\ p,  \\
                                                 1 & {\rm otherwise}.
                                               \end{array}
                                               \right.
\end{eqnarray*}
\end{lemma}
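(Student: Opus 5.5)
The plan is to reduce the quotient modulo $x-1$, then read off the gcd from a finite computation. Write $\Phi(x)=\frac{x^p-1}{x-1}=1+x+x^2+\cdots+x^{p-1}$. Since $x\equiv 1 \pmod{x-1}$, every term $x^k$ is congruent to $1$ modulo $x-1$, so $\Phi(x)\equiv p \pmod{x-1}$. Hence
$$
{\rm gcd}\left(x-1,\Phi(x)\right)={\rm gcd}(x-1,p),
$$
and because $p$ is prime this gcd is either $p$ or $1$.

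The two cases are then immediate. The gcd equals $p$ exactly when $p\mid x-1$, i.e. $x\equiv 1 \pmod p$, and equals $1$ otherwise. The hypothesis $|x|>1$ ensures $x-1\neq 0$, so the gcd and the quotient are well defined. A negative value of $x-1$ or of $\Phi(x)$ is harmless, since the gcd is taken in absolute value.

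For the second identity, put $\Psi(x)=\frac{x^p+1}{x+1}=\sum_{k=0}^{p-1}(-1)^k x^k$. This is valid because $p$ is odd, so $x^p+1=(x+1)\Psi(x)$. Modulo $x+1$ we have $x\equiv -1$, so $(-1)^k x^k\equiv (-1)^{2k}=1$ and therefore $\Psi(x)\equiv p\pmod{x+1}$. Again ${\rm gcd}(x+1,\Psi(x))={\rm gcd}(x+1,p)$, which is $p$ if $x\equiv -1\pmod p$ and $1$ otherwise. Alternatively, the second identity follows from the first by substituting $-x$ for $x$ and using $(-x)^p=-x^p$.

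No step is a real obstacle. The only points needing care are checking that the quotients are integers, which holds for $p$ odd in the plus case, and that $x\pm1\neq 0$, which follows from $|x|>1$.
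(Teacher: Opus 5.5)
Your proof is correct and is essentially the paper's argument: the paper also reduces $\frac{x^p\mp 1}{x\mp 1}$ modulo $x\mp 1$, by writing $x=(x\mp 1)\pm 1$ and expanding, to obtain ${\rm gcd}(x\mp 1,p)$. Your congruence formulation is cleaner than the paper's display for the plus case, where the alternating sum for $\frac{x^{2k+1}+1}{x+1}$ is misprinted with the wrong degree and signs, although the paper's final result is the same as yours.
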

\begin{proof}
With the fact that ${\rm gcd}\left(a, b\right)={\rm gcd}\left(a, b-a\right)$ and ${\rm gcd}\left(a, b\right)={\rm gcd}\left(a, -b\right)$, we have
\begin{eqnarray*}
{\rm gcd}\left(x-1, \frac{x^n-1}{x-1}\right)
&=& {\rm gcd}\left(x-1, x^{n-1}+x^{n-2}+\cdots+x+1\right) \\
&=& {\rm gcd}\left(x-1, (x-1+1)^{n-1}+(x-1+1)^{n-2}+\cdots+(x-1+1)+1\right) \\
&=& {\rm gcd}\left(x-1, n\right)
\end{eqnarray*}
for all positive integer $n$ and
\begin{eqnarray*}
{\rm gcd}\left(x+1, \frac{x^{2k+1}+1}{x+1}\right)
&=& {\rm gcd}\left(x+1, x^{2k+1}-x^{2k}+\cdots+x-1\right) \\
&=& {\rm gcd}\left(x+1, (x+1-1)^{{2k+1}}-(x+1-1)^{2k}+\cdots+(x+1-1)-1\right) \\
&=& {\rm gcd}\left(x+1, -(2k+1)\right) \\
&=& {\rm gcd}\left(x+1, 2k+1\right)
\end{eqnarray*}
for all non-negative integer $k$.
It follows that
\begin{eqnarray*}
&&{\rm gcd}\left(x-1, \frac{x^p-1}{x-1}\right) = {\rm gcd}\left(x-1, p\right)
=\left\{\begin{array}{cc}
                                                 p & {\rm if}\ x\equiv 1 \ {\rm mod}\ p,  \\
                                                 1 & {\rm otherwise},
                                               \end{array}
                                               \right. \\
&&{\rm gcd}\left(x+1, \frac{x^p+1}{x+1}\right) = {\rm gcd}\left(x+1, p\right)
= \left\{\begin{array}{cc}
                                                 p & {\rm if}\ x\equiv -1 \ {\rm mod}\ p,  \\
                                                 1 & {\rm otherwise}.
                                               \end{array}
                                               \right.
\end{eqnarray*}
for odd prime $p$.
\end{proof}

\begin{remark}
In fact, as
$$
\frac{x^n-y^n}{x-y}=\frac{(x-y+y)^n-y^n}{x-y}=k(x-y)+ny^{n-1}
$$
for some integer $k$, we have
$$
{\rm gcd}\left(x-y, \frac{x^n-y^n}{x-y}\right)={\rm gcd}(x-y, n)
$$
for coprime integers $x, y$, which also leads to the conclusion.
\end{remark}

The following work by Lebesgue \cite{Lebesgue} is needed, in order to deal with equation $x^m-y^2=1$. See also Cassels \cite{Cassels} and Tang \cite{Tang}, who reproduced Lebesgues's proof.

\begin{lemma}\label{Lebesgue}
With $m\geq 2$, the equation $x^m-y^2=1$ has no solution in positive integers.
\end{lemma}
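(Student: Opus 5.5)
We first reduce to the case where $m$ is an odd prime. Suppose $x^m-y^2=1$ with $x,y\geq 1$. If $m$ is even, then $(x^{m/2}-y)(x^{m/2}+y)=1$ with $x^{m/2}+y\geq 2$, which is impossible. Otherwise $m$ has an odd prime factor $p$, and $X=x^{m/p}$ satisfies $X^p=y^2+1$. So it suffices to treat $x^p=y^2+1$ with $p$ an odd prime.

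\emph{Parity.} If $y$ were odd, then $y^2+1\equiv 2 \pmod 4$. But $x^p$ with $p\geq 3$ is either odd or divisible by $8$, so this cannot happen. Hence $y$ is even and $x$ is odd.

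\emph{Factoring in $\mathbb{Z}[i]$.} Factor $(y+i)(y-i)=x^p$ in the principal ideal domain $\mathbb{Z}[i]$. A common divisor of $y\pm i$ divides $2i$, whose only prime factor is $1+i$. On the other hand, $N(y+i)=y^2+1$ is odd, so $1+i$ does not divide $y+i$, and the two factors are coprime. By unique factorization, $y+i=u\alpha^p$ for some unit $u\in\{\pm1,\pm i\}$ and some $\alpha\in\mathbb{Z}[i]$. Since $p$ is odd, every unit is a $p$-th power, so we may absorb $u$ and write $y+i=(a+bi)^p$. Comparing imaginary parts gives
$$
1=b\sum_{k=0}^{(p-1)/2}\binom{p}{2k+1}(-1)^k a^{p-1-2k}b^{2k}.
$$
Thus $b=\pm1$, and
$$
\sum_{k=0}^{(p-1)/2}\binom{p}{2k+1}(-1)^k a^{p-1-2k}=\pm1 .
$$
Also $x=a^2+b^2=a^2+1$ is odd, so $a$ is even.

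\emph{Reduction modulo $4$.} Since $a$ is even, every term with $k<(p-1)/2$ is divisible by $a^2$, hence by $4$. The last term, $k=(p-1)/2$, equals $(-1)^{(p-1)/2}$. If the right-hand side is $-(-1)^{(p-1)/2}$, reducing modulo $4$ gives $\pm 2\equiv 0 \pmod 4$, a contradiction. So the right-hand side must equal the last term, which forces
$$
\sum_{k=0}^{(p-3)/2}\binom{p}{2k+1}(-1)^k a^{p-1-2k}=0.
$$
Since $a\neq 0$ (otherwise $x=1$ and $y=0$), we may divide by $a^2$.

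\emph{The $2$-adic comparison (main obstacle).} The key step is to show that the term with $k=(p-3)/2$, namely $\pm\binom{p}{2}a^2$, has strictly smaller $2$-adic valuation than every other term. Write $j=(p-1)/2-k\geq 2$, so the general term is $\binom{p}{2j}a^{2j}$ up to sign. Then
$$
\binom{p}{2j}a^{2j}=\binom{p}{2}a^2\cdot\frac{2\,(p-2)(p-3)\cdots(p-2j+1)}{(2j)!}\,a^{2j-2}.
$$
Here the product $(p-2)\cdots(p-2j+1)$ has integer quotient when divided by $(2j-2)!$, so the ratio is $\binom{p-2}{2j-2}\cdot\frac{2a^{2j-2}}{(2j)(2j-1)}$. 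Its $2$-adic valuation is at least
$$
(2j-2)\,v_2(a)+1-v_2(2j)\geq 2j-1-v_2(2j)>0,
$$
using $v_2(a)\geq1$ and $v_2(2j)\leq \log_2(2j)<2j-1$ for $j\geq 2$. Hence the whole sum has the same $2$-adic valuation as $\binom{p}{2}a^2\neq 0$, so it cannot vanish. This contradiction finishes the proof.

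Everything except this valuation estimate is routine. The estimate itself is where care is needed, in particular the integrality of the intermediate quotient and the handling of $v_2((2j)!)$; if the bookkeeping becomes awkward, I would instead estimate $v_2\bigl(a^{2j-2}/(2j)!\bigr)$ directly.
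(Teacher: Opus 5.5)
Your proof is correct and follows essentially the same route as the paper: factor $y+i$ in $\mathbb{Z}[i]$, read off $b=\pm1$ from the imaginary part, rule out one sign by a congruence (your mod $4$ step is the paper's $w^2\mid 2$), and rule out the other with the same $2$-adic comparison via $\binom{m}{2k}w^{2k-2}=\binom{m}{2}\binom{m-2}{2k-2}\frac{2}{2k(2k-1)}w^{2k-2}$. The differences are cosmetic: you reduce to a prime exponent and absorb the unit into the $p$-th power (valid since $\gcd(p,4)=1$), which avoids the paper's case split on the parity of $s$, and you write out the bound $v_2(2j)<2j-1$ that the paper leaves implicit.
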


\begin{proof}
If $m$ is even, $x^m-y^2=1$ turns to be $(x^{m/2}-y)(x^{m/2}+y)=1$, which means $x^{m/2}-y=x^{m/2}+y=1$. The result turns out immediately. Hence assume that $m$ is odd. Let $x, y$ be positive integers such that $x^m=y^2+1$. If $y$ is odd, we have $x^m \equiv 2$ mod $4$, which is impossible. Therefore $y$ is even and $x$ is odd.

Consider the quadratic field $\mathbb{Q}(i)$, where $i=\sqrt{-1}$. Then gcd$(y-i, y+i)$ is a unit of $\mathbb{Q}(i)$. With $x^m=(y+i)(y-i)$, it follows that the factor $y+i$ is a m-th power in $\mathbb{Q}(i)$ up to a unit, that is, $y+i=(u+iv)^m i^s, 0\leq s<3, u, v\in\mathbb{Z}$. Hence $y-i=(u-iv)^m (-i)^s$. Therefore $x^m=(u^2+v^2)^m$, which leads to $x=u^2+v^2$. Then we have $u$ or $v$ is even, as $x$ is odd.
As
$$
2i= y+i-(y-i)=((u+iv)^m-(u-iv)^m(-1)^s)i^s,
$$
if $s$ is even, we get
$$
1 = (-1)^r \left( m u^{m-1} v - \binom{m}{3} u^{m-3} v^3 + \cdots \pm v^m \right).
$$
Thus $v\mid 1$, hence $v=\pm 1$ is odd.
If $s$ is odd, we have
$$
1 = (-1)^r \left( u^m - \binom{m}{2} u^{m-2} v^2 + \ldots \pm m u v^{m-1} \right),
$$
which tells $u=\pm 1$ is odd.

Let $w$ be the even one of $u$ and $v$. Then
$$
1 - \binom{m}{2} w^2 + \binom{m}{4} w^4 - \ldots \pm m w^{m-1} = \pm 1.
$$
If the sign on the right of the above equation is $-$, we get $w^2\mid 2$, which contradicts with $w$ is even. If the sign is $+$, we obtain
$$
\binom{m}{2} - \binom{m}{4} w^2 + \ldots \pm m w^{m-3} = 0.
$$
As $w$ is even, we know $\binom{m}{2}$ is even. Let $2^r \| \binom{m}{2}$ and $2^l\| \binom{m}{2k} w^{2k-2}, k\geq 2$. With
$$
\binom{m}{2k} w^{2k-2} = \binom{m}{2} \binom{m-2}{2k-2} \times \frac{2}{2k(2k-1)} w^{2k-2},
$$
we get $l\geq r+1$. Therefore the case with sign $+$ is impossible. This concludes the proof of the lemma.

\end{proof}

We also need to deal with equation $x^m-y^2=-1$. Hence we present the result shown by Chao Ko \cite{Ko1} \cite{Ko2}. One can also refer to Chein \cite{Chein}.

\begin{lemma}\label{Ko}
With $n>3$, the equation $x^n-y^2=-1$ has no solution in positive integers.
\end{lemma}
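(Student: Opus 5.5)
We would first reduce to the case where $n$ is an odd prime $p\geq 5$. If $n$ has a prime factor $p\geq 5$, replace $x$ by $x^{n/p}$. Otherwise $n=2^a3^b>3$, and there are two subcases. If $4\mid n$, then $y^2=(x^{n/4})^4+1$, which has no positive solution by Fermat's descent on $X^4+1=Y^2$. If $9\mid n$, or if $n=6$ (so that $y^2=(x^3)^2+1$), set $X=x^{n/3}$, which is a perfect cube or a perfect square; then $y^2=X^3+1$. The only positive solution of $y^2=X^3+1$ is the classical one $X=2$, $y=3$, and $2$ is neither a cube nor a square, so it is excluded.

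So let $p\geq 5$ be prime and $y^2-1=x^p$. First suppose $y$ is even. Then $y-1$ and $y+1$ are coprime odd numbers whose product is a $p$-th power, so each is a $p$-th power. Two $p$-th powers differing by $2$ must be $1$ and $-1$, which forces $y=0$, a contradiction. Hence $y$ is odd and $x$ is even. Then $\gcd(y-1,y+1)=2$, and we can write
$$
y+\varepsilon = 2^{p-1}u^p,\qquad y-\varepsilon = 2v^p,\qquad x=2uv,
$$
with $\varepsilon=\pm1$, $v$ odd and $\gcd(u,v)=1$. Subtracting gives $2^{p-2}u^p - v^p=\varepsilon$. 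Multiplying by $4$ and using $x^p=4v^{2p}+4\varepsilon v^p$, we arrive at
$$
(v^2)^p + \varepsilon\cdot 2\,(2u)^p\ \text{type identity}\ \Longrightarrow\ (v^2)^p - \varepsilon'(2u)^p = (v^p+2\varepsilon'')^2
$$
for appropriate signs. Hence $(v^2\mp 2u)\cdot\frac{(v^2)^p\mp(2u)^p}{v^2\mp 2u}$ is a perfect square.

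Next we apply Lemma \ref{gcd}, in the remark's form $\gcd\big(a-b,\frac{a^p-b^p}{a-b}\big)=\gcd(a-b,p)$ with $a=v^2$ and $b=\pm 2u$ coprime. There are two cases.
\begin{enumerate}
\item[(i)] The gcd is $1$. Then both factors are squares, in particular $v^2\mp 2u=w^2$. We would show that $\frac{A^p-B^p}{A-B}$ with $A=v^2$ cannot be a square. For this we evaluate it modulo $v^2\mp 2u$ and modulo suitable odd primes, using Jacobi symbols and quadratic reciprocity, with $v$ odd and $u$ possibly even.
\item[(ii)] The gcd is $p$. Then $v^2\mp 2u=pw^2$ and the cyclotomic quotient equals $p\,t^2$. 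We would again derive a contradiction via a Jacobi symbol computation, using $v^2\equiv \pm 2u \pmod p$ to compute the quotient modulo higher powers of $p$, or modulo the small factor $v^2\mp2u$.
\end{enumerate}

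The main obstacle is this final Jacobi symbol argument, which is Chein's core step. The task is to show that the quotient $\frac{(v^2)^p\mp(2u)^p}{v^2\mp 2u}$, or $p$ times it, cannot be a square. The first thing we would try is to reduce it modulo $2u$ (where it is congruent to $v^{2(p-1)}$, a square, so that gives no information) and then modulo $v$ (where it is congruent to $(2u)^{p-1}$). If that fails, we would use the relation $2^{p-2}u^p\equiv\varepsilon \pmod v$ to compute $\left(\frac{2u}{v}\right)$ and extract a sign contradiction from reciprocity.
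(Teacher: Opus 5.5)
Your reduction to a prime exponent $p\ge 5$ is fine, as are the treatment of $y$ even and the factorization $y+\varepsilon=2^{p-1}u^p$, $y-\varepsilon=2v^p$. The identity you are reaching for is $(v^2)^p+\varepsilon(2u)^p=(v^p+2\varepsilon)^2=\bigl(\frac{y+3\varepsilon}{2}\bigr)^2$, so the relevant linear factor is $v^2+2\varepsilon u$. There is a genuine gap: cases (i) and (ii) are only intentions, and the reductions you propose cannot produce a contradiction. Modulo $2u$ and modulo $v$, the cyclotomic quotient is congruent to the even powers $v^{2(p-1)}$ and $(2u)^{p-1}$. In case (i) these are automatically compatible with the quotient being a square. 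In case (ii) they only say that $p$ is a quadratic residue modulo the odd prime factors of $uv$, and you give no reason why that should fail. The paper's argument, which follows Chein, uses no reciprocity at all.

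Two ingredients are missing, one for each case.

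\textbf{Case (i).} Nothing about the second factor is needed. From $v^2+2\varepsilon u=w^2$ you get $(vw)^2+u^2=(v^2+\varepsilon u)^2$. This is a primitive Pythagorean triple with $vw$ odd, so $u=2cd$ and $v^2+\varepsilon u=c^2+d^2$ with $c>d\ge 1$. Hence $v=|c-\varepsilon d|<2cd=u$. On the other hand, $v^p=2^{p-2}u^p-\varepsilon>u^p$ forces $v>u$, a contradiction.

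\textbf{Case (ii).} Here $p\mid \frac{y+3\varepsilon}{2}$, hence $p\nmid y$ because $p>3$. The paper rules this out by first proving $p\mid y$. Suppose $p\nmid y$. Lemma \ref{gcd} applied to $y^2=(x+1)\cdot\frac{x^p+1}{x+1}$ makes the two factors coprime, so $x+1=r^2$. Then $(y,x^{(p-1)/2})$ is a solution of $X^2-(r^2-1)Y^2=1$, whose fundamental solution is $(r,1)$. A St\"ormer-type analysis of the $Y$-coordinates shows that none of them equals $(r^2-1)^{(p-1)/2}$.

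Without this a priori divisibility $p\mid y$, or a fully worked-out substitute, case (ii) stays open and the proposal does not prove the lemma.
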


\begin{proof}
If $n$ is even, the conclusion is trivial, as there is no positive integer solution for $x^2-y^2=-1$. If $n=3$, the only solution is $x=2, y=3$ due to Mih${\rm \breve{a}}$ilescu's beautiful work on Catalan's conjecture. Therefore, it suffices to show that if $p>3$ is a prime, the equation $x^p-y^2=-1$ has no solution in positive integers.
Suppose that $x, y$ are positive integers such that $x^p = y^2 - 1$.
If $y$ is even, gcd$(y+1, y-1)=$gcd$(y+1, 2)=1$, we get $y+1=c^p, y-1=d^p$ for some positive integers $c, d$. It follows that $c^p-d^p=2$, which is impossible. Hence $y$ has to be odd.

If $p\nmid y$, with lemma \ref{gcd}, we know
$$
{\rm gcd}\left(x+1, \frac{x^p+1}{x+1}\right)=1.
$$
So there exists integers $u>1, v>0$ such that
$$
\begin{cases}
x+1=u^2, \\
\frac{x^p+1}{x+1}=v^2.
\end{cases}
$$
Hence $y^2-(u^2-1)((u^2-1)^{(p-1)/2})^2=1$, which means $(y, (u^2-1)^{(p-1)/2})$ is a solution of Pell equation $X^2-(u^2-1)Y^2=1$. The fundamental solution is $(u, 1)$, because $u^2-(u^2-1)=1$. Let $(x_m, y_m), m\geq 1$ be the solutions of $X^2-(u^2-1)Y^2=1$ in positive integers. We have $$
x_m+y_m \sqrt{u^2-1} = (u + 1\cdot\sqrt{u^2-1})^m.
$$
Expanding the right of the above equation, not hard to see we couldn't find $m\geq 1$ such that $y_m = (u^2-1)^{(p-1)/2}$, which contradicts with $(y, (u^2-1)^{(p-1)/2})$ is a solution. Therefore $p\mid y$.

Since $y$ has to be odd, we have gcd$(y+1, y-1)=2$. It follows that either
$$
(I)\ \begin{cases} y + 1 = 2a^p, \\ y - 1 = 2^{p-1}b^p, \end{cases} \ \ {\rm or}\ \ (II)\ \begin{cases} y + 1 = 2^{p-1}b^p, \\ y - 1 = 2a^p, \end{cases}.
$$
with $a$ odd, $a, b$ coprime positive integers and $y=2ab$.
Hence
$$
a^p-2^{p-2}b^p=\pm 1
$$
according to the case. Then
$$
(a^2)^p \mp (2b)^p = (a^p \mp 2)^2 = \left( \frac{y \mp 3}{2} \right)^2.
$$
Since $p\mid y$ and $p>3$, we get $p\nmid \frac{y\mp 3}{2}$. Hence $p\nmid {\rm gcd}\left(a^{2} \mp 2b, \frac{(a^{2})^{p} \mp (2b)^{p}}{a^{2} \mp 2b}\right)$.
It follows that
$$
{\rm gcd}\left(a^{2} \mp 2b, \frac{(a^{2})^{p} \mp (2b)^{p}}{a^{2} \mp 2b}\right) = {\rm gcd}\left(a^{2} \mp 2b, p\right)=1.
$$
Therefore $a^2\mp 2b=h^2$, where $h$ divides $(y\mp 3)/2$. Note that $h$ is odd, with $a^2-h^2=\pm 2b$, we know $b$ is even. Then
$$
(ha)^2+b^2=(a^2\mp b)^2.
$$
The positive integers $ha, b, a^{2} \mp b$ are relatively prime. They constitute a primitive solution of $X^2+Y^2=Z^2$. Hence there exist integers $c, d\geq 1$ such that
$$
\begin{cases}
ha = c^{2} - d^{2}, \\
b = 2cd, \\
a^{2} \mp b = c^{2} + d^{2}.
\end{cases}
$$
Then $(c \pm d)^{2} = (a^{2} \mp b) \pm b = a^{2}$. In the first case
$$
b - a = 2cd - (c + d) = (c - 1)(d - 1) + (cd - 1) > 0,$$
that is, $a < b$. However $a^{q} = 2^{q-2}b^{q} + 1 > b^{q}$, so $a > b$, a contradiction. In the second case,
$$b - a = 2cd - (c - d) = c(2d - 1) + d > 0.$$
So $a < b$. However, $a^{q} = 2^{q-2}b^{q} - 1 > b^{q}$, hence $a > b$, which is also impossible.
\end{proof}

\begin{remark}
With lemma \ref{Lebesgue} and \ref{Ko}, it is not hard to see, the only integer solution of $x^m-y^2=1$ is $(x, y)=(1, 0)$ with $m$ being an odd prime, and the the only integer solution of $x^n-y^2=-1$ is $(x, y)=(0, \pm 1)$ with $n>3$ being an odd prime.
\end{remark}

By the end of this section, we introduce the celebrated work by Nagell and Ljunggren.

\begin{lemma}\label{Ljunggren}
If $x, n$ are integers such that $|x|>1$ and $n>2$, $\frac{x^n-1}{x-1}$ can't be a perfect square number except for $n=4, x=7$ or $n=5, x=3$.
\end{lemma}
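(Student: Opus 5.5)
Since this lemma is the classical theorem of Nagell and Ljunggren, in the paper I would simply cite it. If I had to prove it, my plan would be to split first by the arithmetic shape of $n$. The elementary cases would reduce to the equations of Lemma \ref{Lebesgue} and Lemma \ref{Ko}, or to quartic equations, and the odd prime case would be treated separately.

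\textbf{Step 1: $n$ even.} Write $n=2m$, so that
$$\frac{x^n-1}{x-1}=\frac{x^m-1}{x-1}\,(x^m+1), \qquad \gcd\Big(\frac{x^m-1}{x-1},\,x^m+1\Big)\mid 2 .$$
If this product is a square, then either $x^m+1$ is $\pm$ a square or $\pm 2$ times a square.
\begin{itemize}
\item[(a)] The square case is the equation $X^m-Y^2=-1$ (or $X^m-Y^2=1$ when the signs are moved to $-x$). This is excluded by Lemma \ref{Lebesgue} and Lemma \ref{Ko}, together with the remark following them, except for $m\le 3$.
\item[(b)] The case $2\times$square forces $\frac{x^m-1}{x-1}$ to be $2\times$square as well. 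I would then iterate the argument in $m$.
\end{itemize}
The small leftover cases $m=2,3$ would be handled directly. For $n=4$ this leads to $x+1=2a^2$ and $x^2+1=2b^2$. Eliminating $x$ gives Ljunggren's equation $X^2+1=2Y^4$, whose only solutions are $Y=1$ and $Y=13$. These produce exactly the exceptional case $x=7$, $n=4$.

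\textbf{Step 2: $n$ odd composite.} If $p\mid n$ with $p$ an odd prime, write
$$\frac{x^n-1}{x-1}=\frac{x^p-1}{x-1}\cdot\frac{x^n-1}{x^p-1}.$$
By Lemma \ref{gcd}, applied to $x^p$ in place of $x$, the two factors share at most a factor $p$. One needs to check that this common factor enters to an even power or not at all. Granting that, one factor is (up to a factor $p$) a square, so the claim reduces to the prime exponent case $\frac{y^p-1}{y-1}=\square$ or $p\,\square$ with $y=x$ or $y=x^p$. A parallel argument handles the $p\,\square$ variant.

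\textbf{Step 3: $n=p$ an odd prime.} This is the heart of the matter and is where I expect the main obstacle. The elementary tools of this section (gcd computations, Pell equations, Pythagorean parametrizations) only seem to reach $p=3$. For $p=3$ the equation $x^2+x+1=y^2$ gives $(2x+1)^2+3=(2y)^2$, forcing $|x|\le 1$.

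For $p\ge 5$ I would follow Ljunggren. Set $X=x$ and write $4\frac{x^p-1}{x-1}$ as a norm form from the cyclotomic field $\mathbb{Q}(\zeta_p)$. Using the resulting factorization in the real subfield, reduce to a Thue-type equation in a quartic (for $p=5$) or higher-degree order. Then apply Skolem's $p$-adic method to the unit equation to show that only finitely many, explicitly listable, solutions exist. For $p=5$ this recovers $x=3$, where $1+3+9+27+81=121$. The general prime case needs Nagell's congruence reductions to dispose of $p\ge 7$. Here I would try first to show that a solution forces $x\equiv 0,\ 1 \pmod p$ and then derive a contradiction from the $p$-adic expansion of the unit. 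This analytic-algebraic step is the part I would not expect to reproduce elementarily, which is why the paper invokes the lemma by citation rather than proof.
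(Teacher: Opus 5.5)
\textbf{There is a genuine gap: Step~3 is not proved.} For odd primes $p\ge 5$ you only describe a programme: a cyclotomic norm form, a Thue equation, Skolem's $p$-adic method, unspecified ``Nagell congruence reductions'', and an unproved claim that $x\equiv 0,1 \pmod p$. Skolem's method handles one Thue equation at a time, so as outlined it gives no argument uniform in $p$.

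Your premise that elementary tools stop at $p=3$ is also what the paper contradicts. It does not merely cite the lemma; it sketches a proof that treats \emph{all} odd $n>3$ at once, with no reduction to prime exponents. The missing idea is a Pell-type equation. From $(x^n-1)(x-1)=((x-1)y)^2$ and $x^n=x\,(x^{(n-1)/2})^2$, write $x-1=t^2s$ with $s$ squarefree to get
$$(tsy)^2-xs\,\bigl(x^{(n-1)/2}\bigr)^2=-s .$$
This equation has the solution $(ts,1)$. So $x^{(n-1)/2}=y_m$ for some odd $m$, where $s^{(m-1)/2}(x_m+y_m\sqrt{xs})=(ts+\sqrt{xs})^m$. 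Using that every prime factor of $y_m$ divides $D=xs$, the paper's sketch concludes $m\in\{1,3\}$. Then $m=1$ gives $x=1$, and $m=3$ gives $x^{(n-1)/2}=4x-3$, i.e.\ $x=3$, $n=5$. Equivalently, for $n=2k+1$ the pair $(x^k,y)$ solves $xU^2-(x-1)V^2=1$, whose positive solutions come from odd powers of $\sqrt{x}+\sqrt{x-1}$.

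\textbf{Step~2 also fails as written.} In $\frac{x^p-1}{x-1}\cdot\frac{x^n-1}{x^p-1}$, the second factor is $\frac{X^{n/p}-1}{X-1}$ with $X=x^p$. By the remark after Lemma~\ref{gcd}, the common factor divides $n/p$, not $p$. For example, with $n=21$, $p=3$, $x=2$, both factors are divisible by $7$.

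Suppose you reorder so that the prime exponent sits in the second factor, with $Y=x^{n/p}$. The case $Y\equiv 1 \pmod p$ then gives $p\,\|\,\frac{Y^p-1}{Y-1}$, and you land on $\frac{Y^p-1}{Y-1}=p\,\square$. This is not ``parallel'' to the lemma: for $p=3$ it is a Pell equation with infinitely many solutions, e.g.\ $\frac{22^3-1}{21}=3\cdot 13^2$. You would still have to use that $Y$ is a perfect power and that the cofactor is also $p$ times a square, and your outline does not do this. The paper's uniform treatment of odd $n$ avoids the issue.

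\textbf{Step~1 essentially matches the paper's two even cases,} with two small corrections.
\begin{itemize}
\item For $n\equiv 2 \pmod 4$, the factor $\frac{x^m-1}{x-1}$ is odd, so the gcd is $1$ and your case (b) cannot occur.
\item For $n=4$, substituting $x=2a^2-1$ gives $b^2=2a^4-2a^2+1$, not $X^2+1=2Y^4$. Reaching Ljunggren's equation requires going through the Pell structure of $x^2-2b^2=-1$. The paper instead works directly with the system $z+1=2e^2$, $z^2+1=2f^2$.
\end{itemize}
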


The proof of it first appears in Ljunggren's paper \cite{Ljunggren}. For English version, one may refer \cite{Ribenboim2}(see A8.1). Here is a sketch of the proof below.

\begin{proof}
When $n=3$, it's obvious that $x^2+x+1$ isn't a perfect square number, as $x^2<x^2+x+1<(x+1)^2$, $(x-1)^2<x^2-x+1<x^2$ for all positive integer $x>1$.
We assume $n>3$ in the following and show the proof in three parts.

\textbf{Case $1$}: $4\mid n$.

Write $n=2^a m$, with $m$ odd and $a\geq 2$. Let $z=x^m$. Suppose
\begin{eqnarray*}
\frac{x^n-1}{x-1} &=& \frac{z^{2^a}-1}{z-1} \cdot \frac{x^m-1}{x-1} \\
&=& \frac{x^m-1}{x-1}\cdot (z+1)(z^2+1)\cdots (z^{2^{a-1}}+1)
\end{eqnarray*}
is a perfect square.
We have the facts that
\begin{eqnarray*}
&&{\rm gcd}\left(z^{2^j}+1, z^{2^i}+1\right)=\left\{\begin{array}{cc}
                                                 1 & {\rm if}\ z \ {\rm is\  even},  \\
                                                 2 & {\rm if}\ z \ {\rm is\   odd}, \
                                               \end{array}
                                               \right.   \\
&&{\rm gcd}\left(\frac{x^m-1}{x-1}, x^{2^i m}+1\right) = 1,
\end{eqnarray*}
where $j, i$ are distinct non-negative integers.
Thus, if $z$ is even, each of $\frac{x^m-1}{x-1}, (z+1), (z^2+1), \ldots,  (z^{2^{a-1}}+1)$ is a square. In particular, $z^2+1$ is a square, which is impossible. Hence $z$ is odd.

If $a=2$, we have
$$
\begin{cases}
z+1 = 2 e^2 ,  \\
z^2+1 = 2 f^2,
\end{cases}
$$
where $e, f$ are positive integers.
It follows that $x^m=z= \pm 1$ or $7$. With the condition that $|x|>1$, we get $m=1, n=4, x=7$.

If $a>2$, by similar argument, we have
$$
\begin{cases}
z^2+1 = 2 e^2 ,  \\
z^4+1 = 2 f^2
\end{cases}
$$
for some positive integers $e, f$. Therefore $z^2=1$ or $7$, which means $x=\pm 1$, contradicting with $|x|>1$.

\textbf{Case $2$}: $n>3$ is even.

In virtue of Case $1$, we may assume $n=2m$, where $m\geq 2$ is odd. Suppose
\begin{eqnarray*}
\frac{x^n-1}{x-1} &=& \frac{x^m-1}{x-1}(x^m+1).
\end{eqnarray*}
is a perfect square. By the fact that
$$
{\rm gcd}\left(\frac{x^m-1}{x-1}, x^m+1\right)=1,
$$
there exist integers $s, t$ such that gcd$(s, t)=1$ and
$$
\begin{cases}
\frac{x^m-1}{x-1} = s^2 ,  \\
x^m+1=t^2.
\end{cases}
$$
According to lemma \ref{Ko}, this is impossible.

\textbf{Case $3$}: $n>3$ is odd.

Suppose $\frac{x^n-1}{x-1} = y^2$ for some integer $y$.
Assume $x>1$, we have
$$
(x^n-1)(x-1)=((x-1)y)^2,
$$
which leads to
$$
((x-1)y)^2-x(x-1)(x^{(n-1)/2})^2=-(x-1).
$$
Write $x-1=t^2 s$, where $s$ is a square-free integer and $t$ is a positive integer. Hence
$$
(tsy)^2-xs(x^{(n-1)/2})^2=-s.
$$
Knowing that gcd$(x, s)=1$ and $s$ is square-free, it's easy to get that $xs$ is not a square.
Consider the equation
$$
X^2-DY^2=C,
$$
where $D=xs, C=-s$. This equation has fundamental solution $(ts, 1)$, as
$$
t^2((ts)^2-(xs)\cdot 1^2)=(t^2s)^2-xt^2s =(x-1)^2-x(x-1)=-(x-1)=-t^2s.
$$
It follows that all the solutions in positive integers are $(x_m, y_m)$, where $m\geq 1$ is odd, and
$$
s^{(m-1)/2}(x_m+y_m\sqrt{xs})=(ts+\sqrt{xs})^m
$$
In particular, there exists odd $m\geq 1$ such that
$$
tsy=x_m,  \ \  x^{(n-1)/2}=y_m.
$$
Note that if a prime $q$ divides $y_m$, then $q$ divides $D=xs$.
We have $m=1$ or $3$. If $m=1$, $x^{(n-1)/2}=y_1=1$, which contradicts with $x>1$. If $m=3$, we get
$$
x^{(n-1)/2}=y_3=\frac{3t^2s^2+xs}{s}=4x-3,
$$
which means $3=x(4-x^{(n-3)/2})$. Therefore $x=1$(excluded) or $x=3, n=5$.

Assume $x<-1$, let $z=-x>1$. Then
$$
\frac{z^n+1}{z+1}=y^2,
$$
which leads to
$$
((z+1)y)^2-z(z+1)(z^{(n-1)/2})^2=z+1.
$$
Let $z+1=t^2s$, where $s$ is square-free. We have
$$
(tsy)^2-zs(z^{(n-1)/2})^2=s.
$$
With similar arguments as case $x>1$, we get that there's no integer solution to it.

\end{proof}

\section{Proof of Theorem \ref{maintheorem}}

In this section, the proof of theorem \ref{maintheorem} is given, with facts shown in section $2$.
Not hard to see, theorem \ref{maintheorem} is equal to say
there are no three consecutive powerful numbers of the form
$$
x^p-1=q_1^3 y^2, x^p, x^p+1 =q_2^3 z^2,
$$
where $q_1, q_2$ are primes, $x, y, z\in \mathbb{Z}$ and $p \equiv 5$ mod $8$ is an odd prime.

Suppose that there exists $q_1, q_2, x, y, z$ and $p$ defined as above, such that $x^p-1=q_1^3 y^2, x^p, x^p+1 =q_2^3 z^2$ are all powerful numbers. Easy to see when $|x|\leq 1$, the triple won't be consecutive powerful numbers. Thus $|x|>1$. We will derive contradictions in three scenarios.

\textbf{Case $1$}: $x\not\equiv \pm 1$ mod $p$.

Knowing that $p$ is an odd prime, with lemma \ref{gcd}, we have
$$
{\rm gcd}\left(x-1, \frac{x^p-1}{x-1}\right)={\rm gcd}\left(x+1, \frac{x^p+1}{x+1}\right)=1.
$$
If $q_1\mid x-1$, we have $q_1^3\mid x-1$ and $\frac{x^p-1}{x-1}$ is a perfect square. Together with the fact that when $x=3, p=5$, $x^p-1=242$ isn't a powerful number, the later contradicts with lemma \ref{Ljunggren}. Hence we have $q_1\mid \frac{x^p-1}{x-1}$ and $x-1$ is a perfect square.

Applying similar argument, we have $q_2\mid \frac{x^p+1}{x+1}$ and $x+1$ ia a perfect square. It follows that both $x-1$ and $x+1$ are perfect squares, which is impossible, as we always have $(n+1)^2-n^2=2n+1>2$ for integer $n>1$.

\textbf{Case $2$}: $x\equiv 1$ mod $p$.

With lemma \ref{gcd}, we have
$$
{\rm gcd}\left(x-1, \frac{x^p-1}{x-1}\right)=p,
$$
$$
{\rm gcd}\left(x+1, \frac{x^p+1}{x+1}\right)=1.
$$
The former implies $x-1=pv^2$, $pq_1v^2$ or $(pv)^2$ and the later implies $q_2\mid \frac{x^p+1}{x+1}, x+1=t^2$ for some integers $v, t$.
Hence we have $t^2 \equiv 2$ mod $p$, i.e., $2$ is a quadratic residue mod $p$, which is equivalent to $p \equiv \pm 1$ mod $8$. This contradicts the assumption $p \equiv 5$ mod $8$.

\textbf{Case $3$}: $x\equiv -1$ mod $p$.

By similar argument as case $2$, we get $s^2\equiv -2$ mod $p$, i.e., $-2$ is a quadratic residue mod $p$, which is equivalent to $p \equiv 1$ or $3$ mod $8$. It also yields a contradiction with $p \equiv 5$ mod $8$. This concludes the proof of the theorem.

\section{Proof of Corollary \ref{corollary1}}

Applying theorem \ref{maintheorem}, we provide the proof of corollary \ref{corollary1} is this section.

Suppose the Diophantine equation
$$
(2ax)^{2n}-1=((2ax)^n-1)((2ax)^n+1)=q_1^3 q_2^3 y^2
$$
has a solution with some integers $a, x, y$, primes $q_1, q_2$ and integer $n\geq 5$ that only contains prime factors belonging to $S$. We have
$$
{\rm gcd}((2ax)^n-1, (2ax)^n+1)={\rm gcd}((2ax)^n-1, 2)=1.
$$
Hence we must have
$$
q_1 q_2 \mid (2ax)^n-1, q_1 q_2 \nmid (2ax)^n+1,
$$
$$
q_1 q_2 \nmid (2ax)^n-1, q_1 q_2 \mid (2ax)^n+1,
$$
or $q_1$ divides exactly one of $(2ax)^n-1, (2ax)^n+1$ and $q_2$ divides the other one.

If $q_1 q_2 \mid (2ax)^n-1, q_1 q_2 \nmid (2ax)^n+1$, we have $(2ax)^n+1 = u^2$ for some integer $u$. Substituting $2ax=X, u=Y$, this is just $X^n-Y^2=-1$, which has no integer solution except for $(X, Y)=(0, \pm 1)$ due to lemma \ref{Ko}. Thus $a=x=0$. However, $0^{2n}-1=-1$ isn't of the form $q_1^3 q_2^3 y^2$.

If $q_1 q_2 \nmid (2ax)^n-1, q_1 q_2 \mid (2ax)^n+1$, we have $(2ax)^n-1 = v^2$ for some integer $v$. Substituting $2ax=X, u=Y$, this becomes $X^n-Y^2=1$, which has no integer solution except for $(X, Y)=(1, 0)$ due to lemma \ref{Lebesgue}. Thus $2ax=1$, which has no integer solution.

The only case left is $q_1$ divides exactly one of $(2ax)^n-1, (2ax)^n+1$ and $q_2$ divides the other one. Without loss of generality, suppose $(2ax)^n-1=q_1^3 y_1^2$ and $(2ax)^n+1=q_2^3 y_2^2$ for some integers $y_1, y_2$. This contradicts Theorem \ref{maintheorem}. The corollary follows.

\section{Remarks}

\begin{remark}
We get the result with elementary tools. If apply some other methods, hopefully the result could be extended to some other prime $p$.

The reason we need to restrict $p$ to the condition that $p\equiv 5$ mod $8$ is to "kill" case $2$ and $3$ with quadratic residue in the proof of theorem \ref{maintheorem}. There are chances we use other tools, such as properties of Diophantine equations instead of quadratic residue, to get better result.

For example, in case $2$ of the proof of theorem \ref{maintheorem}, we can have $p\neq q_1$ first, which leads to the non-existence of $x-1=pq_1 v^2$, and draw out $x-1=(pv)^2$ by the fact it is a perfect square. Hence the only remaining is $x-1=pv^2$, which yields $t^2-pv^2=2$. Applying Diophantine equation methods, there are chances that we can go on with it. Similar phenomenon appears in case $3$. Of course there are some new points to deal with when applying this.
\end{remark}


\begin{remark}
Our result may extend to other cases, such as 3-full numbers. When $q=3, n\not\equiv 5$ mod $6$, $x, y, n>1$, Nagell-Ljunggren equation $\frac{x^n-1}{x-1}=y^q$ is known to have no integer solution except for $(x, y, n, q)=(18, 7, 3, 3)$(see \cite{Ljunggren}). Applying this, we will show that $x^p-1=q_1^{\alpha_1}y^3, x^p, x^p+1=q_2^{\alpha_2}z^3$ can't be consecutive 3-full numbers when $p\not\equiv 5$ mod $6$ and neither $2$ nor $-2$ is cubical residue mod $p$.
\end{remark}

\end{document}